\newcommand{\swoosh}{\includegraphics[width=0.35in]{swoosh.pdf}}
\newcommand{\R}{\mathbb{R}}
\newcommand{\C}{\mathbb{C}}
\newcommand{\N}{\mathbb{N}}
\newcommand{\Z}{\mathbb{Z}}
\newcommand{\Hil}{\mathcal{H}}
\DeclareMathOperator{\ran}{ran}
\newcounter{Theorem}
\numberwithin{equation}{section}
\numberwithin{Theorem}{section}
\theoremstyle{plain} 
\newtheorem{thm}[Theorem]{Theorem}
\theoremstyle{definition}
\newtheorem{defn}[Theorem]{Definition}
\theoremstyle{remark}
\newtheorem{remark}{Remark}[section]
\newtheorem{ex}[Theorem]{Example}
\begin{document}

\title{Diagonals of self-adjoint operators with finite spectrum}

\author{Marcin Bownik}
\address{Department of Mathematics, University of Oregon, Eugene, OR 97403--1222, USA}
\email{mbownik@uoregon.edu}

\author{John Jasper}
\address{Department of Mathematics,
University of Missouri,
Columbia, MO 65211--4100, USA}
\email{jasperj@missouri.edu}

\keywords{diagonals of self-adjoint operators, the Schur-Horn theorem, the Pythagorean theorem, the Carpenter theorem, spectral theory}

\thanks{
The first author was partially supported by NSF grant DMS-1265711 and by the Simons Foundation grant \#244422.
The second author was supported by NSF ATD 1042701}

\subjclass[2000]{Primary: 42C15, 47B15, Secondary: 46C05}
\date{\today}

\begin{abstract}
 Given a finite set $X\subseteq\R$ we characterize the diagonals of self-adjoint operators with spectrum $X$. Our result extends the Schur-Horn theorem from a finite dimensional setting to an infinite dimensional Hilbert space analogous to Kadison's theorem for orthogonal projections \cite{k1,k2} and the second author's result for operators with three point spectrum \cite{jj}. 
\end{abstract}

\maketitle


\section{Introduction}


The characterization of the diagonals of self-adjoint operators began with the classical Schur-Horn theorem \cite{horn, schur}. It can be stated as follows, where $\mathcal H_N$ is $N$ dimensional Hilbert space over $\R$ or $\C$, i.e., $\mathcal H_N=\R^N$ or $\C^N$.

\begin{thm}[Schur-Horn theorem]\label{horn} 
Let $\{\lambda_{i}\}_{i=1}^{N}$ and $\{d_{i}\}_{i=1}^{N}$ be real sequences in nonincreasing order. There exists a self-adjoint operator $E:\mathcal H_N \to\mathcal H_N$ with eigenvalues $\{\lambda_{i}\}$ and diagonal $\{d_{i}\}$
if and only of 
\begin{equation}\label{horn1}
\sum_{i=1}^{N}\lambda_{i} =\sum_{i=1}^{N}d_{i} \quad\text{ and }\quad \sum_{i=1}^{n}d_{i} \leq \sum_{i=1}^{n}\lambda_{i} \text{ for all } 1\le n \le N.
\end{equation}
\end{thm}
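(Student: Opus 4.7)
The plan is to prove the two implications separately.

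For \emph{necessity}, I would diagonalize $E$ in an orthonormal eigenbasis $\{v_j\}$ with $Ev_j = \lambda_j v_j$ and expand the basis in which the diagonal is read as $e_i = \sum_j u_{ij} v_j$, where $U = (u_{ij})$ is unitary. Then
\[
d_i = \langle Ee_i, e_i\rangle = \sum_j |u_{ij}|^2 \lambda_j,
\]
so $d = S\lambda$ with $S = (|u_{ij}|^2)$ doubly stochastic, which yields $\sum_i d_i = \sum_j \lambda_j$ on summing in $i$. For the partial-sum inequality I would invoke Ky Fan's principle
\[
\sum_{i=1}^n \lambda_i = \max\bigl\{\tr(QEQ) : Q \text{ an orthogonal projection of rank } n\bigr\},
\]
applied with $Q$ equal to the projection onto $\lspan\{e_1,\ldots,e_n\}$, giving $\sum_{i=1}^n d_i = \tr(QEQ) \le \sum_{i=1}^n \lambda_i$. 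Ky Fan's identity itself follows from $\tr(QEQ) = \sum_j \norm{Qv_j}^2 \lambda_j$ together with $\sum_j \norm{Qv_j}^2 = \tr(Q) = n$ and $\norm{Qv_j}^2 \in [0,1]$.

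For \emph{sufficiency}, I would induct on $N$, constructing an orthonormal basis $\{u_1,\ldots,u_N\}$ with $\langle Du_i, u_i\rangle = d_i$, where $D = \operatorname{diag}(\lambda_1,\ldots,\lambda_N)$; the unitary $U$ with columns $u_i$ then produces $E := U^*DU$ with the prescribed spectrum and diagonal. The base case $N=1$ is trivial. For the inductive step, pick $k$ with $\lambda_k \ge d_1 \ge \lambda_{k+1}$ (such $k$ exists since $\lambda_1 \ge d_1$, while $N d_1 \ge \sum_i d_i = \sum_i \lambda_i \ge N\lambda_N$ forces $d_1 \ge \lambda_N$), and set $u_1 = a v_k + b v_{k+1}$ with $a^2 + b^2 = 1$ and $a^2\lambda_k + b^2\lambda_{k+1} = d_1$. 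A short computation shows that the compression $PDP$ of $D$ to $\{u_1\}^\perp$ is diagonal in the orthonormal basis $\{v_j\}_{j \ne k, k+1} \cup \{bv_k - av_{k+1}\}$, with eigenvalues $\{\lambda_j\}_{j \ne k, k+1} \cup \{\lambda_k + \lambda_{k+1} - d_1\}$. Provided this reduced eigenvalue list, reordered decreasingly, majorizes $(d_2,\ldots,d_N)$, the inductive hypothesis applied to the compression produces $u_2,\ldots,u_N$.

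The \emph{main obstacle} is the residual majorization check. Writing the reduced list in decreasing order as $\mu_1 \ge \cdots \ge \mu_{N-1}$, I must verify $\sum_{i=1}^n \mu_i \ge \sum_{i=2}^{n+1} d_i$ for each $1 \le n \le N-1$, with equality at $n = N-1$. The total-sum equality is automatic from $d_1 + \sum_i \mu_i = \sum_i \lambda_i = \sum_i d_i$. For the partial sums, a case analysis on the position of $\lambda_k + \lambda_{k+1} - d_1$ within the sorted list, together with the elementary bound $\lambda_k + \lambda_{k+1} - d_1 \le \lambda_k$, reduces each inequality to one of the original hypotheses $\sum_{i=1}^m d_i \le \sum_{i=1}^m \lambda_i$ for a suitable $m \le n+1$. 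This combinatorial bookkeeping, though elementary, is the technical heart of the argument.
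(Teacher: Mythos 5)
The paper does not prove Theorem~\ref{horn} at all: it is stated as the classical Schur--Horn theorem, with necessity attributed to Schur \cite{schur} and sufficiency to Horn \cite{horn}, so there is no in-paper argument to compare against. Judged on its own, your plan is the standard route and it is sound. The Schur direction via the doubly stochastic matrix $S=(|u_{ij}|^2)$ together with Ky Fan's maximum principle is correct. For the Horn direction, your inductive reduction --- choose $k$ with $\lambda_k\geq d_1\geq\lambda_{k+1}$, rotate within $\lspan\{v_k,v_{k+1}\}$ so that $u_1$ has diagonal entry $d_1$, and compress to $\{u_1\}^\perp$ --- is the classical argument, and your identification of the compressed spectrum as $\{\lambda_j\}_{j\neq k,k+1}\cup\{\lambda_k+\lambda_{k+1}-d_1\}$ is right.

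The one step you defer, the residual majorization, is where you should be more careful, but it closes more cleanly than your ``case analysis on the position of the new eigenvalue'' suggests. Since $\mu:=\lambda_k+\lambda_{k+1}-d_1$ satisfies $\lambda_{k+1}\leq\mu\leq\lambda_k$, the reduced list sorted nonincreasingly is $\mu_j=\lambda_j$ for $j<k$, $\mu_k=\mu$, and $\mu_j=\lambda_{j+1}$ for $k<j\leq N-1$; in particular there is no ambiguity about where $\mu$ lands. For $n\geq k$ one computes
\[
\sum_{j=1}^{n}\mu_j=\sum_{j=1}^{n+1}\lambda_j-d_1\geq\sum_{i=1}^{n+1}d_i-d_1=\sum_{i=2}^{n+1}d_i,
\]
which is exactly the original hypothesis at index $n+1$. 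For $n<k$ the reduced partial sum is simply $\sum_{j=1}^n\lambda_j$, and since $d_{n+1}\leq d_1$ one has
\[
\sum_{i=2}^{n+1}d_i\leq\sum_{i=1}^{n}d_i\leq\sum_{j=1}^{n}\lambda_j.
\]
Equality at $n=N-1$ is the trace condition, so the inductive hypothesis applies and the argument is complete. With that paragraph written out, your proof is correct.
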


The necessity of \eqref{horn1} is due to Schur \cite{schur}, and the sufficiency of \eqref{horn1} is due to Horn \cite{horn}. 
There are two problems that immediately arise in extending the Schur-Horn Theorem \ref{horn} to operators on infinite dimensional Hilbert spaces. First, there is no obvious analogue of the eigenvalue sequence $\{\lambda_{i}\}$ for nondiagonalizable operators. Second, even if the operator in question is diagonalizable, the diagonal sequence and eigenvalue sequence cannot generally be rearranged in nonincreasing order.

Positive compact operators are diagonalizable, and both the eigenvalue sequence and diagonal are $c_{0}$ sequences. Thus, a natural analogue of the Schur-Horn theorem can be formulated in terms of majorization inequalities as in \eqref{horn1}. Results of this kind were proven by Gohberg and Markus \cite{gm} and Arveson and Kadison \cite{ak} for trace class operators and extended to positive compact operators by Kaftal and Weiss \cite{kw}. Though the theorem statements are a natural analogue of the Schur-Horn theorem, the proof are much more intricate. Moreover, a complete characterization of the diagonals of compact operators with finite dimensional kernels is not yet complete. For a detailed survey of progress in this area see \cite{kw0}.

It is well known that \eqref{horn1} can be stated by the equivalent convexity condition
\begin{equation}\label{horn2}
(d_1,\ldots,d_N) \in \operatorname{conv} \{ (\lambda_{\sigma(1)},\ldots,\lambda_{\sigma(N)}): \sigma \in S_N\},
\end{equation}
where $S_N$ is a permutation group on $N$ elements. This reformulation does not require an ordering of sequences. In \cite{neu} Neumann gave an infinite dimensional generalization of the Schur-Horn theorem in terms of $\ell^\infty$-closure of the convexity condition \eqref{horn2}. This gives a nice analogue of the finite dimensional theorem, but a great deal of information is lost in taking the closures. Indeed, consider an orthogonal projection $P$ with infinite dimensional kernel and range. From Neumann's work it can only be deduced that the closure of the set of diagonals of $P$ is the set of sequences $\{d_{i}\}$ in $[0,1]$ such that $\sum d_{i} = \sum (1-d_{i})=\infty$. 

A complete characterization of the diagonal sequences of orthogonal projections was discovered by Kadison \cite{k1,k2}. By Theorem \ref{horn}, the necessary and sufficient condition for a finite sequence $\{d_{i}\}_{i=1}^{N}$ in $[0,1]$ to be the diagonal of an orthogonal projection on $\mathcal H_N$ is simply the trace condition $\sum d_{i} \in\N_{0}$. Kadison's theorem gives an elegant analogue of the trace condition on an infinite Hilbert dimensional space $\mathcal H$, see Theorem \ref{Kadison}(ii). Crucially, it does not require an ordering of the terms of the diagonal sequence.
By scaling and translating Kadison's theorem gives a characterization of all self-adjoint operators with two points in the spectrum. Thus, a natural next step after Kadison's work is to look at the diagonals of self-adjoint operators with finite spectrum. As with orthogonal projections these operators are diagonalizable, but we still face the problem that the diagonal terms cannot be ordered. We shall overcome this problem by introducing the concept of interior majorization in Definition \ref{mar} which does not require any ordering.

An additional issue arises that is not present in the case of projections. Kadison's Theorem \ref{Kadison} characterizes the diagonals of the set of \textbf{all} projections. Despite this, the multiplicities of the eigenvalues of a projection $P$ can be easily deduced from the diagonal sequence $\{d_{i}\}$ by
\[\dim\ran P = \sum d_{i}\quad\text{and}\quad\dim\ker P = \sum(1-d_{i}).\]
Thus, Kadison's theorem yields a complete characterization of diagonal sequences of the unitary orbit of a projection $P$. In contrast, there exist operators with the same finite spectrum and the same diagonal that are not unitarily equivalent, see Example \ref{nu}. In other words, self-adjoint operators having different unitary orbits may share the same diagonal.

This leads us to two distinct versions of the Schur-Horn theorem for operators with finite spectrum, which were already present in the second author's work \cite{jj} on operators with 3 point spectrum. In this paper we shall focus on a spectral version of the theorem. That is, we give a characterization of the diagonals of the set of all self-adjoint operators with a given spectrum. In the follow-up paper \cite{mbjj3} we give a version of this result with prescribed multiplicities of eigenvalues.

Our main result can be thought as an analogue of the work by Arveson \cite{a} who identified some necessary conditions which must be satisfied by diagonals of normal operators with finite spectrum. Unlike \cite{a} our main result deals only with self-adjoint operators. On the other hand, Theorem \ref{npt} gives a complete characterization of diagonals of self-adjoint operators with finite spectrum.

\begin{thm}\label{npt} Let $\{A_j\}_{j=0}^{n+1}$ be an increasing sequence of real numbers such that $A_0=0$ and $A_{n+1}=B$, $n\in \N$. Let $\{d_{i}\}_{i\in I}$ be a sequence in $[0,B]$ with $\sum d_{i} = \sum (B-d_{i}) = \infty$. For each $\alpha \in (0,B)$, define
\begin{equation}\label{conv}
C(\alpha) = \sum_{d_{i}<\alpha}d_{i}\quad\text{and}\quad D(\alpha)=\sum_{d_{i}\geq \alpha}(B-d_{i}).
\end{equation}
There exists a self-adjoint operator $E$ with diagonal $\{d_{i}\}_{i\in I}$ and $\sigma(E) = \{A_0, A_1, \ldots, A_{n+1}\}$ if and only if  either:
\begin{enumerate}
\item $C(B/2)=\infty$ or $D(B/2)=\infty$, or 
\item $C(B/2)<\infty$ and $D(B/2)<\infty$, $($and thus $C(\alpha), D(\alpha)< \infty$ for all $\alpha \in (0,B)$$)$, and there exist $N_1, \ldots, N_n \in\N$ and $k\in\Z$ such that:
\begin{equation}\label{npttrace} 
C(B/2)-D(B/2) = \sum_{j=1}^n A_j N_j+kB,
\end{equation}
and for all $r=1,\ldots, n$,
\begin{equation}
\label{nptmaj} (B-A_r)C(A_r)+A_r D(A_r) \geq (B-A_r)\sum_{j=1}^r A_j N_j +  A_r\sum_{j=r+1}^n (B-A_j)N_j.
\end{equation}
\end{enumerate}
\end{thm}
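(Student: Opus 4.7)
I would handle necessity and sufficiency separately, reducing where possible to Kadison's theorem \cite{k1,k2} (the case $n=0$) and to the three-point result \cite{jj} (the case $n=1$). For \emph{necessity}, case (i) is trivial, and a monotonicity argument shows that $C(B/2),D(B/2)<\infty$ propagates to $C(\alpha),D(\alpha)<\infty$ for every $\alpha\in(0,B)$. For the majorization inequalities \eqref{nptmaj}, I would introduce for each $r\in\{1,\ldots,n\}$ the concave piecewise-linear tent function
\[
\phi_r(x)=\min\bigl\{(B-A_r)x,\;A_r(B-x)\bigr\},\qquad x\in[0,B],
\]
vanishing at $0,B$ and peaking at $A_r$. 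Jensen's operator inequality for concave functions yields $\phi_r(d_i)=\phi_r(\langle Ee_i,e_i\rangle)\geq\langle\phi_r(E)e_i,e_i\rangle$ for each $i$. Summing over $i$, the left side is exactly $(B-A_r)C(A_r)+A_rD(A_r)$, and since $\phi_r(A_0)=\phi_r(A_{n+1})=0$, the right side equals $\tr\phi_r(E)=\sum_{j=1}^n\phi_r(A_j)N_j$, where $N_j:=\dim\ran P_j$ is the multiplicity of the eigenvalue $A_j$; case-by-case evaluation of $\phi_r(A_j)$ matches this to the right side of \eqref{nptmaj}. Finiteness of the left side then forces $N_j\in\N$ for all middle $j$.

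For the trace identity \eqref{npttrace}, let $Q$ be the orthogonal projection onto $\overline{\lspan\{e_i:d_i\geq B/2\}}$. Using the pointwise spectral bounds $\|P_{n+1}e_i\|^2\leq d_i/B$ (for $d_i<B/2$) and $1-\|P_{n+1}e_i\|^2\leq(B-d_i)/(B-A_n)$ (for $d_i\geq B/2$), both derivable from the identity $d_i=\sum_j A_j\|P_je_i\|^2$, one shows that $(I-Q)P_{n+1}(I-Q)$ and $Q(I-P_{n+1})Q$ are trace class. A Kadison-style essential-codimension argument then upgrades this to trace-class-ness of $P_{n+1}-Q$ with integer trace $k\in\Z$. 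Since
\[
E-BQ=\sum_{j=1}^n A_jP_j+B(P_{n+1}-Q)
\]
is the sum of a finite-rank operator and $B$ times a trace-class operator, it is trace class, and computing its trace two ways---diagonally it equals $C(B/2)-D(B/2)$, and via the decomposition it equals $\sum_{j=1}^n A_jN_j+Bk$---yields \eqref{npttrace}.

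For \emph{sufficiency}, I would build $E$ as a direct sum of blocks. Partition $I=I_0\sqcup I_1$ with $|I_0|<\infty$, apply the finite-dimensional Schur-Horn Theorem \ref{horn} on $I_0$ to produce a finite matrix with diagonal $\{d_i\}_{i\in I_0}$ and spectrum $\{A_0,\ldots,A_{n+1}\}$ whose multiplicities incorporate the $N_j$ and the integer offset $k$, and apply Kadison's theorem \cite{k1,k2} on $I_1$ to obtain a scaled projection of spectrum $\{0,B\}$ with diagonal $\{d_i\}_{i\in I_1}$. In case (i) the partition admits slack and the construction is flexible; in case (ii) the identity \eqref{npttrace} pins down the multiplicities and offset, while \eqref{nptmaj} is tuned to deliver exactly the majorization needed on $I_0$ to invoke Theorem \ref{horn}. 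I expect an induction on $n$, peeling off one middle eigenvalue at each step and verifying that the residual sequence satisfies the hypotheses for $n-1$, to be the cleanest route.

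The main obstacle is the sufficiency, specifically the combinatorial engineering of the partition together with the block multiplicities. Translating the global inequalities \eqref{nptmaj} into the finite-dimensional majorization required on $I_0$, while simultaneously ensuring the residual diagonal on $I_1$ satisfies Kadison's integer trace condition with the correct $k$, requires delicate bookkeeping that substantially generalizes the $n=1$ analysis of \cite{jj}; this is where I expect the argument to be most intricate.
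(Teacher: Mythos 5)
Your necessity argument is correct and in fact somewhat slicker than the paper's. The paper proves the majorization inequality by expanding the spectral diagonals $p_i^{(j)}=\langle P_j e_i,e_i\rangle$ and verifying termwise that the left side of its reformulated inequality is nonnegative and the right side nonpositive; your observation that the tent functions $\phi_r$ are concave and that Jensen's operator inequality yields $\phi_r(d_i)\ge\langle\phi_r(E)e_i,e_i\rangle$, which on summing gives exactly $(B-A_r)C(A_r)+A_rD(A_r)\ge\tr\phi_r(E)$, packages the same content in one line and makes the finiteness of the middle multiplicities $N_j$ transparent. Your trace identity via $E-BQ$ and the essential codimension of the pair $(P_{n+1},Q)$ is also sound; the paper reaches the same conclusion by applying Kadison's theorem directly to the projection $P_{n+1}$ with diagonal $\{p_i^{(n+1)}\}$ (noting $\sum_{d_i<B/2}p_i^{(n+1)}<\infty$ and $\sum_{d_i\ge B/2}(1-p_i^{(n+1)})<\infty$), which is a bit more direct and avoids having to establish trace-class-ness of $P_{n+1}-Q$ as a separate step.

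The sufficiency, however, is where the real work lies, and your proposal does not contain the ideas that make the construction close. The paper does \emph{not} peel off one eigenvalue at a time; rather, it reformulates condition (ii) for nondecreasing sequences $\{d_i\}_{i\in\Z}$ as a ``Riemann interior majorization'': there exists a nondecreasing $\Z$-indexed sequence $\{\lambda_i\}$ taking the values $A_0,\dots,A_{n+1}$ (with $A_j$ appearing $N_j$ times in the middle) such that the partial defects $\delta_m=\sum_{i\le m}(d_i-\lambda_i)$ are nonnegative and tend to $0$. This reformulation (Theorem \ref{eqmajs}) is the crucial insight that converts the $n$ coupled inequalities \eqref{nptmaj} into a single majorization of the type that Theorems \ref{horn-ninc} and \ref{horn-ndec} can digest. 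The operator is then built in \emph{one} step: choose $m_0$ minimizing $\delta_m$ over the middle block $[0,\sigma]$, use the moving lemma (Lemma \ref{movelemma}) to shift mass $\delta_{m_0}$ from the small tail to the large tail so that the two halves decouple exactly, and apply the finite-rank Schur--Horn results to each half (plus, in the degenerate subcase, Kadison's theorem for a trailing $\{0,B\}$-block). Your plan---partition $I$ into a finite $I_0$ and an infinite $I_1$, then induct on $n$ while ensuring the residual diagonal on $I_1$ satisfies Kadison with the right offset---leaves open precisely the hard question of how to choose $I_0$, the multiplicities, and the offset simultaneously so that all $n$ inequalities collapse to the finite-dimensional majorization. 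Because the inequalities \eqref{nptmaj} for different $r$ are globally coupled through the same $N_j$ and $k$, a naive peel-off-$A_1$ induction does not produce a residual sequence with a clean ``$n-1$'' problem, and I do not see how to make that route work without something equivalent to the Riemann reformulation. As written, the sufficiency is a sketch with a genuine gap.
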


We remark that the assumption that $\sum d_{i} = \sum (B-d_{i}) = \infty$ is not a true limitation of Theorem \ref{npt}. Indeed, the summable case $\sum d_{i}<\infty$, or its symmetric variant  $\sum (B-d_{i})<\infty$, leads to a finite rank Schur-Horn theorem, see \cite[Theorems 2.1 and 2.2]{mbjj3}. This case requires a different set of conditions which are closely related to the classical Schur-Horn majorization. Finally, the assumption $A_0=0$ is made only for simplicity; the general case follows immediately by a translation argument.

We should also emphasize that the numbers $N_1,\ldots,N_n$ in general do  not correspond to multiplicities of eigenvalues $A_1,\ldots,A_n$, see Example \ref{nu}. This is unlike the main theorem in \cite{mbjj3}, where the numbers $N_1,\ldots,N_n$ represent multiplicities of eigenvalues in the interior majorization inequality \eqref{nptmaj}. In addition, the main result of \cite{mbjj3} has much more complicated statement since it also involves exterior majorization conditions that are very sensitive to the locations of eigenvalues with infinite multiplicities. Hence, the multiplicity-free Theorem \ref{npt}, though theoretically deductible from its counterpart, is not an easy consequence of \cite{mbjj3}. 
For these reasons we shall give a direct proof of Theorem \ref{npt}. The additional benefit of our approach is that it gives a short and largely independent proof, which does not rely on a long argument showing \cite[Theorem 1.3]{mbjj3} in its entirety. Instead, we merely use two particular results from \cite{mbjj3} on the equivalence of Riemann and Lebesgue interior majorization and the sufficiency of Riemann majorization. Combining this with the key  existence result in the non-summable case from \cite{jj} yields the sufficiency part of Theorem \ref{npt}. The proof of the necessity part is self-contained as it relies only on Kadison's Theorem \ref{Kadison}.

\section{Necessity of interior majorization}\label{S4}

In this section we will show the necessity in Theorem \ref{npt}. We will make an extensive use of Kadison's theorem \cite{k1,k2} which characterizes diagonals of orthogonal projections. Theorem \ref{Kadison} serves as a prototype for our Theorem \ref{npt}. The common feature of both of these results is a trace condition. The main distinction between them is the lack of majorization inequalities \eqref{nptmaj} in Kadison's Theorem which are present in Theorem \ref{npt}.

\begin{thm}[Kadison]
\label{Kadison} Let $\{d_{i}\}_{i\in I}$ be a sequence in $[0,1]$ and $\alpha\in(0,1)$. Define
\[
C(\alpha)=\sum_{d_{i}<\alpha}d_{i}, \qquad D(\alpha)=\sum_{d_{i}\geq \alpha}(1-d_{i}).\]
There exists an orthogonal projection on $\ell^2(I)$ with diagonal $\{d_{i}\}_{i\in I}$ if and only if either:
\begin{enumerate}
\item $C(\alpha)=\infty$ or $D(\alpha)=\infty$, or
\item $C(\alpha)<\infty$ and $D(\alpha)<\infty$, and
\begin{equation}
\label{kadcond} 
C(\alpha)-D(\alpha)\in\Z.
\end{equation}
\end{enumerate}
\end{thm}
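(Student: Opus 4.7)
The plan is to analyze the spectral decomposition $E = \sum_{j=1}^{n+1} A_j P_j$ of $E$ into its eigenprojections, with $P_0 + P_1 + \cdots + P_{n+1} = I$. Both directions combine Kadison's theorem (Theorem \ref{Kadison}) applied to the top projection $P_{n+1}$ with the finite-rank Schur-Horn theorem (Theorem \ref{horn-ndec}) applied to the intermediate eigenspaces.

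For necessity, suppose such an $E$ exists. I would first treat case (ii) by establishing as a lemma that when $C(B/2), D(B/2) < \infty$, each intermediate projection $P_j$ for $1 \leq j \leq n$ has finite rank: this is a compactness-type statement, following from the fact that the diagonal's summable deviation from $\{0,B\}$ forces the operator $\sum_{j=1}^n A_j P_j$ to be trace class. Setting $N_j := \operatorname{rank}(P_j)$, the trace condition \eqref{npttrace} arises by equating two expressions for $\tr(E) \pmod{B}$: on the diagonal side the truncated sums $C(B/2) - D(B/2)$ capture everything modulo $B$ after absorbing the large diagonal entries into $kB$; on the spectral side the trace equals $\sum_{j=1}^{n+1} A_j \operatorname{rank}(P_j) = \sum_{j=1}^n A_j N_j + B \cdot \operatorname{rank}(P_{n+1})$. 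The majorization inequalities \eqref{nptmaj} follow by applying Theorem \ref{horn-ndec} to appropriate spectral compressions of $E$ — specifically, rewriting the right-hand side of \eqref{nptmaj} as an inner product shows that it expresses $(B - A_r)\tr(E\chi_{[0,A_r)}(E)) + A_r\tr((BI - E)\chi_{(A_r, B]}(E))$ under the spectral decomposition, while the left-hand side is the corresponding quantity computed from the diagonal.

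For sufficiency I would construct $E$ by partitioning the index set $I = I_1 \cup I_2$, where $I_1$ is a finite subset supporting the intermediate finite-rank pieces and $I_2$ supports a Kadison projection scaled by $B$. Case (i), where either $C(B/2) = \infty$ or $D(B/2) = \infty$, gives substantial flexibility to allocate entries; the construction proceeds similarly but with slack. In case (ii), on $I_1$ I would invoke Theorem \ref{horn-ndec} to realize a positive finite-rank operator with eigenvalues $A_j$ of multiplicity $N_j$ and with diagonal given by a carefully chosen subset of $\{d_i\}$; the required Schur-Horn majorization on $I_1$ is to be extracted from \eqref{nptmaj}. On $I_2$, after subtracting the finite-rank contribution and accounting for the integer $k$ from \eqref{npttrace}, the remaining diagonal values (rescaled by $1/B$) should satisfy Kadison's integer condition, allowing a projection to be built there. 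Taking the orthogonal sum yields $E$ with the prescribed spectrum and diagonal.

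The main obstacle is the index selection step in sufficiency: the diagonal must be split so that both the finite-rank Schur-Horn condition on $I_1$ and Kadison's condition on $I_2$ are simultaneously met. I anticipate this requires a delicate inductive argument on $n$, at each stage peeling off one intermediate eigenvalue $A_r$ together with $N_r$ suitably chosen diagonal entries and reducing to a problem with one fewer intermediate eigenvalue on the orthogonal complement. The base case $n=0$ is Kadison's theorem, and $n=1$ is the three-point spectrum case of \cite{jj}, which already displays the combinatorial subtleties. The nontrivial part of the induction is showing that after the selection the reduced majorization inequalities \eqref{nptmaj} at thresholds $A_{r+1}, \ldots, A_n$ and the reduced trace equation \eqref{npttrace} continue to hold — this is where the uniformity of the hypothesis across all $r$, rather than merely at $r = n$, becomes essential.
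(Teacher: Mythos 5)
Your proposal does not prove the statement in question. The statement is Kadison's theorem (Theorem \ref{Kadison}): the characterization of diagonals of orthogonal projections, i.e.\ operators with two-point spectrum $\{0,1\}$, with the key content being the integrality condition $C(\alpha)-D(\alpha)\in\Z$ in the doubly summable case. What you have sketched instead is a proof strategy for the paper's main result, Theorem \ref{npt} (finite spectrum $\{A_0,\ldots,A_{n+1}\}$, conditions \eqref{npttrace} and \eqref{nptmaj}). Worse, as an argument for Kadison's theorem it is circular: you explicitly invoke Theorem \ref{Kadison} as a tool (``Kadison's theorem applied to the top projection $P_{n+1}$'', and again on the index set $I_2$ in the sufficiency construction), and you declare the base case $n=0$ of your induction to \emph{be} Kadison's theorem. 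You cannot establish a statement by an induction whose base case is that very statement.

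Nothing in the proposal engages with what actually has to be proved here: for necessity, that a projection $P$ with diagonal $\{d_i\}$ and $C(\alpha),D(\alpha)<\infty$ forces $C(\alpha)-D(\alpha)$ to be an integer (this is the delicate part of Kadison's Pythagorean/Carpenter theorems \cite{k1,k2}, typically proved by comparing traces of compressions of $P$ and $I-P$ to the two halves of the index set, or by Arveson-type index arguments); and for sufficiency, explicit constructions of projections with prescribed diagonal both when $C(\alpha)+D(\alpha)=\infty$ and when the integer condition holds. Note also that the paper itself does not prove Theorem \ref{Kadison}; it quotes it from \cite{k1,k2} and uses it (together with Remark \ref{rpart}, Theorems \ref{horn-ninc} and \ref{horn-ndec}, and Lemma \ref{movelemma}) to prove Theorem \ref{npt} --- which is essentially the outline you wrote, but aimed at the wrong target. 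To repair this you would need to either reproduce Kadison's original argument or give an independent proof of the two-point case; the finite-spectrum machinery you describe presupposes it rather than delivers it.
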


\begin{remark}\label{rpart}
Note that if there exists a partition of $I=I_{1}\cup I_{2}$ such that
\begin{equation}\label{partition}
\sum_{i\in I_{1}}d_{i}<\infty \quad\text{and}\quad\sum_{i\in I_{2}}(1-d_{i})<\infty,
\end{equation}
then for all $\alpha\in (0,1)$ we have $C(\alpha)<\infty$ and $D(\alpha)<\infty$ and
\[
\bigg(\sum_{i\in I_{1}} d_{i} - \sum_{i\in I_{2}}(1-d_{i}) \bigg) - (C(\alpha)-D(\alpha))\in\Z.
\]
Thus, in the presence of a partition satisfying \eqref{partition},
\[
\bigg(\sum_{i\in I_{1}} d_{i} - \sum_{i\in I_{2}}(1-d_{i}) \bigg) \in\Z
\]
is a necessary and sufficient condition for a sequence to the be the diagonal of a projection. We will find use for these more general partitions in the sequel.
\end{remark}

It is convenient to formalize the concept of interior majorization with the following definition.

\begin{defn}\label{mar}
Let $\{A_j\}_{j=0}^{n+1}$ be an increasing sequence such that $A_0=0$ and $A_{n+1}=B$, $n\in \N$. 
Let $\{d_i\}_{i\in I}$ be a sequence in $[0,B]$. Let $C(\alpha)$ and $D(\alpha)$ be as in \eqref{conv}. 

We say that $\{d_i\}$ satisfies {\it interior majorization} by $\{ A_j\}_{j=0}^{n+1}$ if the following 3 conditions hold:
\begin{enumerate}
\item
$C(B/2)<\infty$ and $D(B/2)<\infty$, and thus $C(\alpha)<\infty$ and $D(\alpha)< \infty$ for all $\alpha \in (0,B)$,
\item
there exist $N_1, \ldots, N_n \in\N$ and $k_0\in \Z$ such that
\begin{equation}
\label{fulltrace1} 
C(A_n) -D(A_n) =  \sum_{j = 1}^n A_j N_j +k_0 B,
\end{equation}
\item 
for all $r=1,\ldots,n$,
\begin{equation}
\label{fullmaj1} 
C(A_r) \ge\sum_{j=1}^{r} A_j N_j + A_r \bigg( k_0 - |\{i\in I: A_r \le d_i < A_n \}|+\sum_{j=r+1}^{n} N_j  \bigg).
\end{equation}
\end{enumerate}
\end{defn}

\begin{remark}
Despite its initial appearance, the interior majorization conditions \eqref{fulltrace1} and \eqref{fullmaj1} are equivalent with  \eqref{npttrace} and \eqref{nptmaj} in Theorem \ref{npt}. Indeed, by Remark \ref{rpart}, \eqref{fulltrace1} is equivalent to the statement that for all $\alpha \in (0,B)$ there exists $k=k(\alpha) \in \Z$ such that
\begin{equation}
\label{fulltrace2} 
C(\alpha) -D(\alpha) =  \sum_{j=1}^{n} A_j N_j +k(\alpha) B,
\end{equation}
Fix $\alpha = A_r$, where $r=1, \ldots, n$. Then, \eqref{fullmaj1} can be rewritten as 
\begin{equation}
\label{fullmaj2} 
C(\alpha) \ge\sum_{j =1}^{r} A_j N_j + \alpha \bigg( k(\alpha)+\sum_{j=r+1}^{n} N_j  \bigg).
\end{equation}
Using \eqref{fulltrace2}, we can remove the presence of $k=k(\alpha)$ in \eqref{fullmaj2} to obtain
\begin{equation}
\label{fullmaj3} 
(B-\alpha)C(\alpha) +\alpha D(\alpha)  \ge
(B-\alpha)\sum_{j = 1}^{r} A_j N_j + \alpha \sum_{j=r+1}^{n} (B-A_j)N_j.
\end{equation}
This is precisely \eqref{nptmaj}, and the above process is reversible. 
\end{remark}


\begin{thm}\label{int-nec} Let $E$ be a self-adjoint operator on $\mathcal H$ with spectrum 
\[
\sigma(E)=\{A_0,\ldots,A_{n+1}\},
\]
where $\{A_j\}_{j=0}^{n+1}$ is an increasing sequence such that $A_0=0$ and $A_{n+1}=B$, $n\in \N$. Let $d_{i}=\langle Ee_{i},e_{i}\rangle$ be a diagonal of $E$ with respect to some orthonormal basis $\{e_{i}\}_{i\in I}$ of $\Hil$. 
Assume that for some $0< \alpha<B$, 
$C(\alpha)<\infty$ and $D(\alpha)<\infty$.
Then,
$\{d_i\}_{i\in I}$ satisfies interior majorization by $\{A_j\}_{j=0}^{n+1}$.
\end{thm}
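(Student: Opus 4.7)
The plan is to apply the partition form of Kadison's theorem (Remark~\ref{rpart}) to each spectral projection $S_s$ of $E$ onto the interval $[A_s, B]$, for $s=1,\ldots,n+1$, using threshold partitions of the index set $I$ determined by the values $A_r$. Starting from the decomposition $E = \sum_{s=1}^{n+1}(A_s - A_{s-1}) S_s$, set $s_i^s := \langle S_s e_i, e_i\rangle \in [0,1]$, so that $d_i = \sum_s(A_s - A_{s-1}) s_i^s$; the pointwise bounds $d_i \geq A_s\,s_i^s$ and $B - d_i \geq (B - A_{s-1})(1 - s_i^s)$ come directly from $E = \sum_j A_j P_j$.

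A short monotonicity argument first upgrades the hypothesis to $C(\alpha), D(\alpha) < \infty$ for every $\alpha \in (0, B)$. Fixing $r \in \{1,\ldots,n\}$ and taking the partition $I_-^r = \{i : d_i < A_r\}$, $I_+^r = \{i : d_i \geq A_r\}$, the pointwise bounds above give $b_s^r := \sum_{I_-^r} s_i^s \leq C(A_r)/A_s$ and $a_s^r := \sum_{I_+^r}(1 - s_i^s) \leq D(A_r)/(B - A_{s-1})$, both finite. Remark~\ref{rpart} applied to $S_s$ then produces an integer $\tilde{k}_s^r := b_s^r - a_s^r \in \Z$. A one-line telescoping gives $\tilde{k}_s^r - \tilde{k}_{s+1}^r = \tr(P_s)$: a positive integer (for $s=1,\ldots,n$, since $A_s \in \sigma(E)$) that is \emph{independent of $r$}. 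Denote it by $N_s$, and set $k_*^r := \tilde{k}_{n+1}^r \in \Z$, so that $\tilde{k}_s^r = k_*^r + \sum_{s' \geq s} N_{s'}$.

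Writing $C(A_r) - D(A_r) = \sum_s (A_s - A_{s-1})\tilde{k}_s^r$ and summing by parts yields $C(A_r) - D(A_r) = k_*^r B + \sum_s A_s N_s$, which for $r=n$ is exactly \eqref{fulltrace1} with $k_0 := k_*^n$. Passing from $r$ to $r+1$ transfers only the \emph{finite} set $\{i : A_r \leq d_i < A_{r+1}\}$ between $I_+^r$ and $I_-^{r+1}$, giving $k_*^{r+1} - k_*^r = |\{i : A_r \leq d_i < A_{r+1}\}|$, whence $k_*^r = k_0 - |\{i : A_r \leq d_i < A_n\}|$. Substituting these identifications into \eqref{fullmaj1} and expanding $C(A_r)=\sum_s(A_s - A_{s-1})b_s^r$, $D(A_r)=\sum_s(A_s - A_{s-1})a_s^r$, a short rearrangement reduces \eqref{fullmaj1} to
\[
\sum_{s=1}^{r}(A_s - A_{s-1})\,a_s^r \;+\; \sum_{s=r+1}^{n+1}(A_s - A_{s-1})\,b_s^r \;\geq\; 0,
\]
which is immediate from $a_s^r, b_s^r \geq 0$.

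The main obstacle is purely bookkeeping: verifying the summability bounds so that Remark~\ref{rpart} applies simultaneously to every $S_s$ and every partition $(I_-^r, I_+^r)$, and then correctly tracking how the Kadison integers $\tilde{k}_s^r$ depend on $r$. Once the identifications $N_s = \tr(P_s)$ and the formula for $k_*^r$ are pinned down, both the trace identity \eqref{fulltrace1} and the majorization \eqref{fullmaj1} fall out of the same nonnegativity of the one-sided partial sums $a_s^r$ and $b_s^r$.
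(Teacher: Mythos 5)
Your argument is correct, and it differs from the paper's in a structurally interesting way. The paper works directly with the individual spectral projections $P_j$ and their diagonals $p_i^{(j)}$: it applies Kadison's theorem (via Remark~\ref{rpart}) only to $P_{n+1}$, at the single threshold $\alpha = A_n$, to manufacture the integer $k_0$; the $N_j$'s for $j\le n$ come simply from finiteness of $\sum_i p_i^{(j)}$; and the majorization \eqref{fullmaj1} is then verified by rewriting it as \eqref{int5} and observing that the left side is $\ge 0$ while the right side is $\le 0$, using $\sum_j p_i^{(j)}=1$. You instead build the cumulative projections $S_s=\sum_{j\ge s}P_j$, write $E=\sum_s(A_s-A_{s-1})S_s$, and apply Remark~\ref{rpart} to each $S_s$ at each threshold $A_r$, getting a whole matrix of Kadison integers $\tilde k_s^r$. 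The telescoping $\tilde k_s^r-\tilde k_{s+1}^r=\tr(P_s)$ then \emph{derives} the $N_s$'s and simultaneously shows they are $r$-independent, the Abel summation $C(A_r)-D(A_r)=\sum_s(A_s-A_{s-1})\tilde k_s^r$ yields the trace identity for every $r$ at once, the shift $k_*^{r+1}-k_*^r=|\{i:A_r\le d_i<A_{r+1}\}|$ makes the $r$-dependence of $k_0$ in \eqref{fullmaj1} automatic, and the inequality itself collapses to $\sum_{s\le r}(A_s-A_{s-1})a_s^r+\sum_{s>r}(A_s-A_{s-1})b_s^r\ge 0$. What your version buys is a more uniform algebraic structure in which the trace condition, the $r$-dependence of the integer, and the majorization inequality all fall out of a single family of applications of Remark~\ref{rpart}; what the paper's version buys is economy, invoking Kadison-type integrality exactly once rather than $n(n+1)$ times and avoiding the extra layer of the $S_s$. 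The underlying reduction to nonnegativity of partial sums of the $p_i^{(j)}$'s is, at bottom, the same in both.
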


\begin{proof}
By the spectral decomposition, we can write
\[
E= \sum_{j=0}^{n+1} A_{j} P_j,
\]
where $P_j$'s are mutually orthogonal projections satisfying $\sum_{j=0}^{n+1}P_j={\mathbf I}$. Let $p^{(j)}_{i}=\langle P_je_{i},e_{i}\rangle$ be the diagonal of $P_j$. Hence, we have
\begin{equation}\label{tt10}
\sum_{j=0}^{n+1} p_i^{(j)}=1
\qquad\text{for all } i\in I.
\end{equation} 

For convenience let $I_0=\{i\in I: d_{i}<\alpha\}$ and $I_1=\{i\in I: d_{i} \ge \alpha\}$. By our assumption
\[
C(\alpha) = \sum_{i\in I_0} d_i <\infty, \qquad  D(\alpha) = \sum_{i\in I_1} (B- d_i)<\infty.
\]
Summing $d_i = \sum_{j=0}^{n+1} A_j p_i^{(j)}$ over $i\in I_0$ yields
\begin{equation}\label{tt11}
\sum_{i\in I_0} p_i^{(j)} < \infty \qquad\text{for }j=1,\ldots,n+1.
\end{equation}
Using \eqref{tt10} we have that $B-d_i = \sum_{j=0}^{n+1} (B-A_j) p_i^{(j)}$. Summing this over $i\in I_1$ yields
\begin{equation}\label{tt12}
\sum_{i\in I_1} p_i^{(j)} < \infty \quad\text{for }j=0,\ldots,n.
\end{equation}
Combining \eqref{tt11} and \eqref{tt12} and applying Theorem \ref{Kadison} yields
\begin{equation}\label{tt13}
N_j : = \sum_{i \in I} p_i^{(j)} \in\N \qquad j=1,\ldots,n.
\end{equation}
Moreover, since $ B(1-p_i^{(n+1)}) = B-d_i+ \sum_{j=0}^n A_j p_i^{(j)}$ by \eqref{tt12} we have
\begin{equation}\label{tt14}
\sum_{i\in I_1} (1-p_i^{(n+1)}) < \infty.
\end{equation}
By \eqref{tt11}, \eqref{tt14}, Theorem \ref{Kadison} and Remark \ref{rpart} applied to the projection $P_{n+1}$ we have
\[
k_0:=\sum_{i\in I_0} p_i^{(n+1)}-\sum_{i\in I_1} (1-p_i^{(n+1)}) \in \Z.
\]
Thus,
\[
\begin{aligned}
C(\alpha) - D(\alpha) &= \sum_{i\in I_0}  \bigg(Bp_i^{(n+1)} +  \sum_{j=1}^{n} A_j p_i^{(j)}  \bigg) - \sum_{i\in I_1} \bigg( B- Bp_i^{(n+1)}  - \sum_{j=1}^{n} A_j p_i^{(j)} \bigg) 
\\
&= Bk_0 + \sum_{j=1}^n N_j A_j.
\end{aligned}
\]
For convenience we let $q_i=p_i^{(n+1)}$. In particular, by letting $\alpha=A_n$ the above shows \eqref{fulltrace1} with $k_0=a-b$, where
\[
a:=\sum_{d_i<A_n}q_{i}<\infty,\qquad b:=\sum_{d_i \ge A_n}(1-q_{i})<\infty.
\]
It remains to show the interior majorization inequality \eqref{fullmaj1}.

Fix $r=1,\ldots,n$, and let $I_{0} = \{i:d_{i}<A_r\}$ and $I_{1} = \{i:d_{i}\geq A_r\}$. By the fact that $k_0=a-b$, we have
\[
\begin{aligned}
k_0- |\{i\in I: A_r \le d_i < A_n \}| 
& = \sum_{d_i < A_n} q_i - \sum_{d_i \ge A_n} (1-q_i) - |\{i\in I: A_r \le d_i < A_n \}|  
\\
& =\sum_{i\in I_0} q_i - \sum_{i\in I_1} (1-q_i).
\end{aligned}
\]
Thus, the required majorization \eqref{fullmaj1} is equivalent to
\begin{equation}\label{int4}
C(A_r)=\sum_{i\in I_0}\bigg( \sum_{j=1}^n 
 A_j p_i^{(j)} +
 Bq_i \bigg) \ge \sum_{j=1}^{r}A_{j} N_j + A_{r} \bigg(\sum_{i\in I_0} q_i - \sum_{i\in I_1} (1-q_i)  + \sum_{j=r+1}^{n}N_{j}\bigg).
\end{equation}
By \eqref{tt13}, we have for $j=1,\ldots,n$,
\[
N_j = \sum_{i\in I_0} p_i^{(j)} + \sum_{i\in I_1} p_i^{(j)}.
\]
Thus, \eqref{int4} can be rewritten as
\begin{equation}\label{int5}
\sum_{i\in I_0}\bigg( \sum_{j=r+1}^{n} (A_j -A_r) p_i^{(j)} + (B-A_r)q_i \bigg) 
\ge \sum_{i\in I_1} \bigg( 
 \sum_{j=1}^{r}A_{j} p_i^{(j)}+ 
 \sum_{j=r+1}^{n} A_r p_i^{(j)}
 + A_{r} (q_i-1)  \bigg).
\end{equation}
Since $\{A_j\}$ is an increasing sequence, the left hand side of \eqref{int5} is $\ge 0$. On the other hand, the right hand side of \eqref{int5} is $\le 0$ as it is dominated by
\[
\sum_{i\in I_1} \bigg( 
 \sum_{j=1}^{n}A_r p_i^{(j)}+ A_{r} q_i-A_r  \bigg) \le 0.
\]
In the last step we used \eqref{tt10}. This shows \eqref{int5}, which implies \eqref{int4}, thus proving \eqref{fullmaj1}. This completes the proof of Theorem \ref{int-nec}.
\end{proof}


\section{Sufficiency of interior majorization}

The goal of this section is to show the sufficiency in Theorem \ref{npt}. The sufficiency of condition (i), that is $C(B/2)+D(B/2)=\infty$, is a consequence of a result established by the second author, see \cite[Corollary 4.5]{jj}.

\begin{thm}\label{3intsuff} Let $\{A_j\}_{j=0}^{n+1}$ be an increasing sequence such that $A_0=0$ and $A_{n+1}=B$, $n\in \N$. Assume $\{d_{i}\}_{i\in I}$ is a sequence in $[0,B]$ such that for some (and hence all) $\alpha\in (0,B)$ we have
\[
C(\alpha) + D(\alpha)=\infty.
\]
Then there is a self-adjoint operator $E$ with $\sigma(E)=\{A_0,A_1,\ldots,A_{n+1}\}$ and diagonal $\{d_{i}\}_{i\in I}$.
\end{thm}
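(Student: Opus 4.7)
My plan is to reduce the $(n+2)$-point spectrum case to the three-point case \cite[Corollary 4.5]{jj} by a direct partition of the index set, with no induction needed. As a preliminary symmetry reduction, observe that the map $d_i \mapsto B - d_i$, which sends $A_j \mapsto B - A_{n+1-j}$, interchanges $C$ and $D$, and replaces $E$ by $B\mathbf{I} - E$, preserves the hypothesis $C(\alpha) + D(\alpha) = \infty$. Hence I may assume without loss of generality that $C(\alpha) = \infty$ for every $\alpha \in (0, B)$; in particular, for any $\alpha > 0$ the set $\{i \in I : d_i < \alpha\}$ is infinite and $\sum_{d_i < \alpha} d_i = \infty$.

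The next step is to partition $I$ into $n$ disjoint pieces $I_1, \ldots, I_n$ so that each restricted sequence $\{d_i\}_{i \in I_j}$ still satisfies $C(\alpha)|_{I_j} = \infty$ (and hence $(C+D)|_{I_j} = \infty$). Fixing any $\alpha \in (0, B)$, a standard greedy argument splits the infinite set $\{i : d_i < \alpha\}$ into $n$ sub-collections $J_1, \ldots, J_n$ with $\sum_{J_j} d_i = \infty$ for each $j$: for example, process the indices in some enumeration and, at each step, assign the next one to the $J_j$ with the smallest current partial sum, so every bucket grows without bound. The remaining indices (those with $d_i \ge \alpha$) are then distributed arbitrarily among the $I_j$'s, with $J_j \subseteq I_j$.

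Finally, I would apply \cite[Corollary 4.5]{jj} on each piece $\ell^2(I_j)$ with target three-point spectrum $\{0, A_j, B\}$, producing a self-adjoint operator $E_j$ on $\ell^2(I_j)$ with $\sigma(E_j) = \{0, A_j, B\}$ and diagonal $\{d_i\}_{i \in I_j}$. The direct sum $E = \bigoplus_{j=1}^n E_j$ on $\ell^2(I) = \bigoplus_j \ell^2(I_j)$ then has diagonal $\{d_i\}_{i \in I}$ and
\[
\sigma(E) = \bigcup_{j=1}^n \sigma(E_j) = \bigcup_{j=1}^n \{0, A_j, B\} = \{A_0, A_1, \ldots, A_{n+1}\},
\]
as required. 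The one point I would expect to be the main obstacle is confirming that \cite[Corollary 4.5]{jj} yields operators with spectrum \emph{exactly} $\{0, A_j, B\}$ and not merely a subset---this is what ensures that every interior value $A_j$ actually appears in $\sigma(E)$ and that no unwanted eigenvalues are introduced. Given this (which is the expected form of the three-point result), the proof reduces to the elementary partition argument above.
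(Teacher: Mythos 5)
Your proposal is a correct argument, and it takes a route that the paper itself does not spell out: the paper simply cites \cite[Corollary 4.5]{jj} as already yielding the conclusion, with no further reduction. Your approach is to reduce the $(n+2)$-point case to the three-point case by partitioning the index set and taking a direct sum, which is a perfectly sound way to prove the statement if the cited corollary is only the three-point result (the phrasing in the paper suggests the cited corollary may already cover arbitrary finite spectrum, in which case your reduction is redundant but still valid). The direct-sum step and the greedy splitting of an infinite-sum family into $n$ infinite-sum subfamilies are both fine, and your worry about the corollary producing the spectrum \emph{exactly} is the right thing to flag.

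One slip worth correcting: the claim ``without loss of generality $C(\alpha)=\infty$ for \emph{every} $\alpha\in(0,B)$'' is not a true WLOG. Take $d_i=B/2$ for all $i$: then $C(\alpha)=0$ for $\alpha\le B/2$ and $D(\alpha)=0$ for $\alpha>B/2$, and the symmetry $d_i\mapsto B-d_i$ fixes this sequence, so neither $C$ nor $D$ can be made identically infinite. What the symmetry does give you is that $C(\alpha_0)=\infty$ for \emph{some} $\alpha_0\in(0,B)$, and since your argument fixes a single $\alpha$ and only uses $C(\alpha)=\infty$ at that value, this weaker statement is all you need; just state the WLOG in that form and pick your $\alpha$ accordingly.
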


Next, we must demonstrate the sufficiency of condition (ii) of Theorem \ref{npt}. To achieve this we shall introduce an alternative variant of interior majorization. To distinguish between these two concepts we shall attach the name of Lebesgue to interior majorization that was defined in Definition \ref{mar}.

\begin{defn}\label{rim}
Suppose that $n\in \N$ and $\{A_{j}\}_{j=0}^{n+1}$ is an increasing sequence in $\R$ such that $A_0=0$ and $A_{n+1}=B$. 
Let $\{\lambda_{i}\}_{i\in\Z}$ be a nondecreasing sequence  which takes values in $\{A_{0},A_{1},\ldots,A_{n+1}\}$, each at least once. Let $\{d_{i}\}_{i\in\Z}$ be a nondecreasing sequence in $[0,B]$ such that $\sum_{i=-\infty}^0 d_i <\infty$.
We say that $\{d_i\}$ satisfies {\it Riemann interior majorization} by $\{A_j\}_{j=0}^{n+1}$ if there exists such a sequence $\{\lambda_{i}\}_{i\in\Z}$ as above, so that the following two hold:
\begin{align}
\label{rmaj1}\delta_{m}:=\sum_{i=-\infty}^{m}(d_{i}-\lambda_{i}) &\geq  0\qquad \text{for all }m\in\Z,
\\
\label{rmaj2}\lim_{m\to\infty}\delta_{m} & = 0.
\end{align}
\end{defn}

\begin{remark} 
 Since $\{\lambda_{i}\}$ is takes values $0$ and $B$ and is nondecreasing, there exists $k\in\Z$ and $N_1,\ldots,N_n\in \N$ such that 
\[
\lambda_{i}=\begin{cases}
0& i \le k, \\
A_r & k+ \sum_{j=1}^{r-1} N_j < i \le k+\sum_{j=1}^r N_j, \\
B & i > k+ \sum_{j=1}^n N_j.
\end{cases}
\]
\end{remark}

Our argument relies on the following two facts. The first result \cite[Theorem 5.2]{mbjj3} establishes the equivalence of two concepts of Lebesgue and Riemann interior majorization for nondecreasing sequences. Note that Theorem \ref{eqmajs} is concerned with numerical sequences without any mention to operators. The second result \cite[Theorem 5.3]{mbjj3} shows the existence of a self-adjoint operator with finite spectrum and prescribed diagonal under Riemann interior majorization.

\begin{thm}\label{eqmajs} Let $\{A_j\}_{j=0}^{n+1}$ be an increasing sequence in $\R$ with $A_{0}=0$ and $A_{n+1}=B$. Let $\{d_{i}\}_{i\in\Z}$ be a nondecreasing sequence in $[0,B]$.
Then, the sequence $\{d_{i}\}$ satisfies Lebesgue interior majorization by $\{A_j\}_{j=0}^{n+1}$ if and only if $\{d_{i}\}$ satisfies Riemann interior majorization by $\{A_j\}_{j=0}^{n+1}$.
\end{thm}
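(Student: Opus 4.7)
The plan is to establish both implications by exploiting the observation that, for a nondecreasing sequence $\{d_i\}_{i\in\Z}$, the level sets $\{i:d_i<\alpha\}$ are downward-closed intervals of $\Z$, so the Lebesgue-style tails $C(\alpha),D(\alpha)$ of \eqref{conv} coincide with partial sums of $\{d_i\}$ and of $\{B-d_i\}$. This makes them directly comparable with the Riemann-style partial sums $\delta_m=\sum_{i\le m}(d_i-\lambda_i)$ from \eqref{rmaj1}.

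For the direction Lebesgue $\Rightarrow$ Riemann, I would construct a candidate $\{\lambda_i\}_{i\in\Z}$ as the unique nondecreasing $\Z$-indexed sequence that takes each value $A_j$ with multiplicity exactly $N_j\in\N$ from Definition \ref{mar}, with $\lambda_i=0$ for $i<s$ and $\lambda_i=B$ for $i\ge s+N_1+\ldots+N_n$, where $s\in\Z$ is a shift parameter still to be chosen. The finiteness of $\sum_{i\le 0}d_i$ and of $\sum_{i>0}(B-d_i)$ needed to make $\delta_m$ meaningful follows from $C(B/2),D(B/2)<\infty$ together with monotonicity. Choosing $s$ so that $\lim_{m\to\infty}\delta_m=0$ reduces, after rearrangement, to a single linear equation in $s$ whose solvability is exactly the trace identity \eqref{fulltrace1}, the integer $k_0$ therein becoming precisely the degree of freedom carried by $s$. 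To verify $\delta_m\ge 0$ for all $m$, I would split into three cases according to which block of $\{\lambda_i\}$ the index $m$ lies in: when $\lambda_m=0$ the inequality is immediate since $\delta_m$ is a sum of nonnegative terms; when $\lambda_m=B$ it follows from $\delta_\infty=0$ combined with the nonpositivity of $d_i-B$ for $i>m$; and when $\lambda_m=A_r$ for some $r\in\{1,\ldots,n\}$, the inequality reduces, after collecting terms, to the majorization condition \eqref{fullmaj1}.

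The reverse direction Riemann $\Rightarrow$ Lebesgue runs analogously. I would set $N_j=|\{i\in\Z:\lambda_i=A_j\}|$ for $j=1,\ldots,n$; each is a positive integer by the assumption that every $A_j$ is attained at least once by $\{\lambda_i\}$, together with the fact that $\lambda_i=0$ for $i\ll 0$ and $\lambda_i=B$ for $i\gg 0$. Finiteness of $C(\alpha)$ follows from $\sum_{i\le 0}d_i<\infty$ plus monotonicity, while $D(\alpha)<\infty$ follows from $\delta_{i_B}\ge 0$ combined with $\lambda_i=B$ for $i>i_B$, which forces $\sum_{i>i_B}(B-d_i)=\delta_{i_B}<\infty$. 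The trace condition \eqref{fulltrace1} is then extracted from $\delta_\infty=0$ by regrouping the telescoping $\sum(d_i-\lambda_i)$ into contributions from the three blocks, producing $\sum_{j=1}^n A_j N_j$ plus an integer multiple $k_0 B$ coming from the $0$- and $B$-blocks. The majorization inequality \eqref{fullmaj1} is then obtained by applying $\delta_m\ge 0$ at the distinguished index $m_r=\max\{i:d_i<A_r\}$ and converting the resulting partial-sum inequality back into the desired bound on $C(A_r)$.

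The main technical obstacle is the bookkeeping for the intermediate case of $\delta_m\ge 0$, and its counterpart when deducing \eqref{fullmaj1}: the jumps of $\{\lambda_i\}$ between consecutive values $A_{j-1}$ and $A_j$ do not in general coincide with the indices at which $d_i$ crosses the thresholds $A_r$. When $m_r$ lands in the middle of a $\lambda$-block, the partial sum $\sum_{i\le m_r}\lambda_i$ must be expanded carefully, and the count $|\{i:A_r\le d_i<A_n\}|$ appearing in \eqref{fullmaj1} must be reconciled with the positional data $s+N_1+\ldots+N_{r-1}$ on the $\lambda$-side before the desired inequality closes.
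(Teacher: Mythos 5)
Your proposal follows essentially the same route as the paper's proof: in both, one exploits monotonicity of $\{d_i\}$ to express $C(A_r),D(A_r)$ as partial sums, builds $\{\lambda_i\}$ with multiplicities $N_j$ and a shift parameter that is identified with the integer $k_0$ through the trace condition, verifies $\delta_m\ge 0$ separately on the $0$-block, $B$-block, and intermediate $A_r$-blocks (with the last requiring the majorization inequality and a monotonicity/block-crossing argument), and reverses the process by reading $N_j$ off $\{\lambda_i\}$ and evaluating $\delta_{m_r}\ge 0$ at the crossing index $m_r$. The bookkeeping obstacle you flag is exactly what the paper resolves via induction on $r$ with two subcases according to whether $m_r\le k+\sigma_r$ or $m_r>k+\sigma_r$.
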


\begin{thm}\label{srim}
Let $\{A_j\}_{j=0}^{n+1}$ be an increasing sequence in $\R$ with $A_{0}=0$ and $A_{n+1}=B$. Let $\{d_{i}\}_{i\in\Z}$ be a nondecreasing sequence in $[0,B]$ that satisfies Riemann interior majorization by $\{A_j\}_{j=0}^{n+1}$.
Then, there is a self-adjoint operator $E$ with $\sigma(E) = \{A_0,\ldots,A_{n+1}\}$ and diagonal $\{d_{i}\}_{i\in\Z}$.
\end{thm}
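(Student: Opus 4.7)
The plan is to realize the operator as an orthogonal direct sum $\tilde E = E_L \oplus BQ$ in which $E_L$ is a positive finite-rank operator on $\ell^2(\{i\le M\})$ carrying the intermediate eigenvalues $A_1,\ldots,A_n$, and $Q$ is an orthogonal projection on $\ell^2(\{i>M\})$, after first passing to a convenient modified diagonal $\{\tilde d_i\}$ via Lemma \ref{movelemma}, and then returning to the given diagonal via Lemma \ref{movelemma}(ii).

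First I would fix a splitting index $M\ge k+\sigma_n$, so that $\lambda_i=B$ for all $i>M$; by \eqref{rmaj2} the tail $\delta_M=\sum_{i>M}(B-d_i)$ is finite, and by assumption $\sum_{i\le 0}d_i<\infty$. Applying Theorem \ref{horn-ndec} to $\{\tilde d_i\}_{i\le M}$ with eigenvalues the $A_j$'s (with multiplicities $N_j$) together with $\ell_L := (M-k-\sigma_n)+\tilde\delta_M/B$ copies of $B$ produces $E_L$ provided the trace equality $\tilde\delta_M\in B\,\Z_{\ge 0}$ is in force and the prefix majorization inequalities $\tilde\delta_{k+r}\ge 0$ for $1\le r\le\sigma_n$ are satisfied; the latter follow from the Riemann interior majorization, provided the subset $I_0$ used in Lemma \ref{movelemma} is placed near the right edge of $\{i\le M\}$ so that the prefix sums $\tilde\delta_{k+r}$ coincide with $\delta_{k+r}$ for $r<\sigma_n$. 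Simultaneously, Kadison's Theorem \ref{Kadison} applied to $\{\tilde d_i/B\}_{i>M}$ (via Remark \ref{rpart} with $I_2=\{i>M\}$ and $I_1=\emptyset$) yields the projection $Q$, the trace condition again being precisely $\tilde\delta_M/B\in\Z_{\ge 0}$. The direct sum $\tilde E = E_L\oplus BQ$ has spectrum $\{A_0,\ldots,A_{n+1}\}$ ($0$ from the infinite-dimensional kernel of $E_L$ together with $\ker Q$; the $A_j$'s from $E_L$; and $B$ from $BQ$ and from the $\ell_L$ copies of $B$ in $E_L$), and Lemma \ref{movelemma}(ii) transfers it to the required $E$ with the original diagonal $\{d_i\}_{i\in\Z}$.

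The main obstacle will be enforcing $\tilde\delta_M\in B\,\Z_{\ge 0}$ by a single application of Lemma \ref{movelemma}. I would set $\eta_0 = \delta_M - \ell B$ for $\ell = \lfloor\delta_M/B\rfloor\ge 1$; this is feasible because finite partial sums of the convergent positive series $\sum_{i>M}(B-d_i)=\delta_M$ can exceed any $\eta_0<\delta_M$. The delicate case is when $0<\delta_M<B$ for every admissible splitting index $M$, so no positive integer $\ell$ is available and no single finite modification can achieve $\tilde\delta_M=0$; there the ansatz must be refined to allow $E_R$ to carry intermediate eigenvalues as well, invoking Theorem \ref{horn-ninc} applied to $\{B-d_i\}_{i>M}$ with eigenvalue multiset drawn from $\{B-A_1,\ldots,B-A_n,B\}$ in place of the pure projection, and checking that the full Riemann interior majorization hypothesis (possibly combined with one further finite application of Lemma \ref{movelemma}) always makes the refined Diophantine condition on $\delta_M$ solvable.
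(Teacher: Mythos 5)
Your plan of a half-line split at an index $M$ into $E_L \oplus BQ$ runs into a genuine obstruction that you yourself identify but do not resolve, and the vague ``refine the ansatz'' paragraph is not a proof. Let me be concrete about why the half-line split cannot work in general, and how the paper's actual argument sidesteps the issue.

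The Kadison trace condition for the projection piece $Q$ requires $\tilde\delta_M/B\in\Z$, and since $\tilde\delta_M=\sum_{i>M}(B-\tilde d_i)\ge 0$ this is $\tilde\delta_M\in B\,\Z_{\ge 0}$. Lemma \ref{movelemma} with $I_1\subset\{i>M\}$ \emph{finite} can only decrease $\delta_M$ by an amount $\eta_0$ strictly less than $\delta_M$ (strict because a finite $I_1$ cannot exhaust the tail sum when, as in the nontrivial case, $d_i<B$ for all $i$). Thus you can only land on $\ell B$ for $\ell\ge 1$, and this is impossible whenever $0<\delta_M<B$. The latter can easily hold for \emph{every} admissible $M$: the sequence $\delta_m$ need not exceed $B$ anywhere, yet it need not vanish either. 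In that regime your construction has no valid splitting index, and the final paragraph about letting $E_R$ carry intermediate eigenvalues and ``checking that the refined Diophantine condition is solvable'' is precisely the missing argument, not a sketch of it.

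The paper resolves this by dropping the half-line constraint in two ways. First, it fixes the minimizer $m_0$ of $\delta_m$ over $0\le m\le\sigma$ and splits into \emph{three} cases according to whether that minimum is interior or attained at an endpoint. In the interior case (Case 1, $\delta_{m_0}<\min\{\delta_0,\delta_\sigma\}$) it abandons the projection piece entirely: both summands are finite-rank positive operators, built with Theorems \ref{horn-ndec} and \ref{horn-ninc} after a single application of Lemma \ref{movelemma} with $\eta_0=\delta_{m_0}$, and the trace equalities hold exactly at $m=m_0$ by construction. No Diophantine condition appears. Second, in the endpoint case (Case 2, $\delta_{m_0}=\delta_\sigma$) where a projection piece \emph{is} used, the finite-rank piece $\tilde E_0$ lives on the \emph{interleaved} index set $I_0\cup\{1,\ldots,\sigma\}$ with $I_0$ a (finite or infinite) subset of $\Z\cap(-\infty,0]$, not a half-line; the freedom to choose which small diagonal entries to allocate to $\tilde E_0$ is exactly what lets the projection's trace land at $0\in\Z$, rather than hoping $\delta_M/B$ happens to be a positive integer. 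The subcase analysis (whether an exact subset $I_0$ exists or only an overshooting finite one) plus the strict inequality $\delta_\sigma>\eta_0$ (valid because the degenerate diagonal \eqref{srim0} was excluded up front) is what makes Lemma \ref{movelemma} applicable with a finite $I_1$. Without both of these devices -- the minimizer $m_0$ and the interleaved index set -- the integrality condition is a hard obstruction, so the gap in your proposal is essential, not cosmetic.
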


By combining Theorems \ref{eqmajs} and \ref{srim} we can show the sufficiency of the Lebesgue interior majorization. In essence, we need to deal with sequences that satisfy Lebesgue interior majorization, but do not conform to more restrictive Riemann interior majorization. Theorem \ref{inthorn} shows the sufficiency part of Theorem \ref{npt}.

\begin{thm}\label{inthorn}
Let $\{A_j\}_{j=0}^{n+1}$ be an increasing sequence in $\R$ with $A_{0}=0$ and $A_{n+1}=B$. Let $\{d_{i}\}_{i\in I}$ be a sequence in $[0,B]$ that satisfies interior majorization by  $\{A_j\}_{j=0}^{n+1}$ and $\sum_{i\in I}d_{i} = \sum_{i\in I}(B-d_{i}) = \infty$.
Then, there is a self-adjoint operator $E$ with spectrum $\sigma(E) = \{A_0,\ldots, A_{n+1}\}$ and diagonal $\{d_{i}\}_{i\in I}$.
\end{thm}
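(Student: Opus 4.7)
The plan is to reduce the present statement to Theorem \ref{srim} by reindexing the sequence $\{d_i\}_{i\in I}$ as a nondecreasing sequence indexed by $\Z$, after peeling off certain diagonal entries equal to $0$ or $B$ which are handled by a direct summand. The main subtlety lies in choosing the peeled-off subset correctly so that the resulting reindexed sequence actually admits a nondecreasing $\Z$-indexing.

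First I would partition $I = I_0 \sqcup I' \sqcup I_B$ according to whether $d_i = 0$, $d_i \in (0,B)$, or $d_i = B$. Since $C(B/2)<\infty$ and $D(B/2)<\infty$, the set $I'$ is countable. Writing $I'_- = \{i\in I' : d_i < B/2\}$ and $I'_+ = \{i\in I' : d_i \ge B/2\}$, the divergence $\sum(B-d_i) = \infty$ combined with $D(B/2)<\infty$ forces $|I_0|\cdot B + \sum_{i\in I'_-}(B-d_i) = \infty$; since the second sum is finite whenever $I'_-$ is finite, this gives $|I'_-|<\infty \Rightarrow |I_0|=\infty$, and symmetrically $|I'_+|<\infty \Rightarrow |I_B|=\infty$. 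In particular, both $\{i\in I : d_i < B/2\}$ and $\{i\in I : d_i \ge B/2\}$ are infinite.

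Next, I define $\tilde I \subseteq I$ to contain all of $I'$ together with a countably infinite subset of $I_0$ whenever $|I'_-|<\infty$, and a countably infinite subset of $I_B$ whenever $|I'_+|<\infty$. Then $\tilde I$ is countable and by construction has infinitely many indices with $d_i < B/2$ and infinitely many with $d_i \ge B/2$, so $\{d_i\}_{i\in\tilde I}$ admits a bijective reindexing as a nondecreasing sequence $\{d'_i\}_{i\in\Z}$. The entries removed (those in $I \setminus \tilde I$) all equal $0$ or $B$, and such entries contribute nothing to $C(\alpha)$, $D(\alpha)$, or to the count $|\{i : A_r \le d_i < A_n\}|$ appearing in Definition \ref{mar}, because $0 < A_r$ and $A_n < B$. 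Hence $\{d'_i\}_{i\in\Z}$ satisfies Lebesgue interior majorization by $\{A_j\}_{j=0}^{n+1}$ with the same $N_j$ and $k_0$ as $\{d_i\}_{i\in I}$, and automatically $\sum_{i\le 0} d'_i \le C(B/2) < \infty$.

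Finally, Theorem \ref{eqmajs} upgrades Lebesgue to Riemann interior majorization for $\{d'_i\}_{i\in\Z}$, and Theorem \ref{srim} furnishes a self-adjoint operator $E_1$ on $\ell^2(\Z)$ with $\sigma(E_1) = \{A_0,\ldots,A_{n+1}\}$ and diagonal $\{d'_i\}_{i\in\Z}$. Setting
\[
E = E_1 \oplus \mathbf{0}_{I_0 \setminus \tilde I} \oplus B\,\mathbf{I}_{I_B \setminus \tilde I},
\]
and noting that $A_0 = 0$ and $A_{n+1} = B$ already lie in $\sigma(E_1)$, we obtain $\sigma(E) = \{A_0,\ldots,A_{n+1}\}$; identifying $\tilde I$ with $\Z$ via the reindexing bijection identifies the diagonal of $E$ with $\{d_i\}_{i\in I}$. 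The principal obstacle in this argument is the reindexing step: one cannot place infinitely many zeros alongside infinitely many positive $d_i < B/2$ in a single nondecreasing $\Z$-sequence, nor can one do the symmetric thing with $B$'s, so the surplus boundary entries must be sloughed off to the direct summand, and the cardinality bookkeeping in the partition step is what guarantees the required padding is available precisely in the cases where it is needed.
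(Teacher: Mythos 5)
Your proof is correct and follows essentially the same strategy as the paper's: reduce to a nondecreasing $\Z$-indexed subsequence containing all entries in $(0,B)$, invoke Theorems \ref{eqmajs} and \ref{srim}, and absorb the discarded boundary entries into a $\mathbf{0}\oplus B\mathbf{I}$ summand. The paper organizes this into four explicit cases according to which of $0$, $B$ are limit points of $\{d_i : d_i \in (0,B)\}$, whereas you package the same content into a single construction of $\tilde I$ with the cardinality bookkeeping up front; the two presentations are equivalent, though yours more cleanly sidesteps the need for $I$ to be countable by peeling off only a countably infinite subset of $I_0$ or $I_B$ rather than all of it.
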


\begin{proof} Set $J:=\{i\in I:d_{i}\in(0,B)\}$ and $J_{\lambda}:=\{i:d_{i}=\lambda\}$ for $\lambda=0,B$. Let ${\mathbf I}$ be the identity operator on a space of dimension $|J_{B}|$ and let $\bf{0}$ be the zero operator on a space of dimension $|J_{0}|$. Since $C(B/2)<\infty$ and $D(B/2)<\infty$, the only possible limit points of $\{d_{i}\}_{i\in J}$ are $0$ and $B$. The argument breaks into four cases depending on the number of limit points.

{\bf Case 1:} Assume both $0$ and $B$ are limit points of the sequence $\{d_{i}\}_{i\in J}$. This implies that there is a bijection $\pi:\Z\to J$ such that $\{d_{\pi(i)}\}_{i\in\Z}$ is in nondecreasing order. Since $\{d_{i}\}_{i\in J}$ still satisfies interior majorization, by Theorem \ref{eqmajs} the sequence $\{d_{\pi(i)}\}_{i\in \Z}$ satisfies Riemann interior majorization. By Theorem \ref{srim} there is a self-adjoint operator $E'$ with diagonal $\{d_{i}\}_{i\in J}$ and $\sigma(E')=\{A_0,\ldots,A_{n+1}\}$. The operator $E'\oplus B {\mathbf I}\oplus \bf{0}$ is as desired. This completes the proof of Case 1.

{\bf Case 2:} Assume $0$ is the only limit point of $\{d_{i}\}_{i\in J}$. Since $\sum_{i\in I}d_{i} = \infty$ we must have $|J_{B}|=\infty$. There is a bijection $\pi:\Z\to J\cup J_{B}$ such that $\{d_{\pi(i)}\}_{i\in\Z}$ is in nondecreasing order.  The sequence $\{d_{\pi(i)}\}_{i\in Z}$ satisfies interior majorization by $\{A_{j}\}_{j=0}^{n+1}$, and Theorem \ref{eqmajs} implies that it also satisfies Riemann interior majorization by $\{A_{j}\}_{j=0}^{n+1}$. By Theorem \ref{srim} there is a self-adjoint operator $E_{0}$ with diagonal $\{d_{\pi(i)}\}_{i\in \Z}$ and $\sigma(E_{0})=\{A_0,\ldots,A_{n+1}\}$. The operator $E=E_{0}\oplus\mathbf{0}$ has the same spectrum and diagonal $\{d_{i}\}$. This completes the proof of Case 2.

{\bf Case 3:} Assume $B$ is the only limit point of $\{d_{i}\}_{i\in J}$. The proof of this case follows by an obvious modification of Case 2.

{\bf Case 4:} Assume $\{d_{i}\}_{i\in J}$ has no limit points. This implies that $J$ is finite and since $\sum_{i\in I}d_{i} = \sum_{i\in I}(B-d_{i}) = \infty$ we also have $|J_{0}|=|J_{B}|=\infty$. There is a bijection $\pi:\Z\to I$ so that $\{d_{\pi(i)}\}_{i\in\Z}$ is nondecreasing. Theorem \ref{eqmajs} implies that $\{d_{\pi(i)}\}$ satisfies Riemann interior majorization by $\{A_{j}\}_{j=0}^{n+1}$. Theorem \ref{srim} implies that there is a self-adjoint operator $E$ with diagonal $\{d_{\pi(i)}\}$ and $\sigma(E) = \{A_0,\ldots,A_{n+1}\}$. This completes the proof of Case 4 and the theorem.
\end{proof}

The following example shows that there exist self-adjoint operators with the same spectrum and diagonal that are not unitarily equivalent. 

\begin{ex}\label{nu} Consider the sequence 
\[\{d_{i}\}_{i\in\Z} = \left\{\ldots,\frac{1}{8},\frac{1}{4},\frac{1}{2},1,\frac{3}{2},\frac{7}{4},\frac{15}{8},\ldots\right\}.\]
Set
\[E_{n} = \left[\begin{matrix} \frac{1}{2^{n}} & \sqrt{\frac{1}{2^{n}}\left(2-\frac{1}{2^{n}}\right)} \\ \sqrt{\frac{1}{2^{n}}\left(2-\frac{1}{2^{n}}\right)} & 2-\frac{1}{2^{n}}\end{matrix}\right].\]
For each $n\in\N$ we have $\sigma(E_{n})=\{0,2\}$, thus the operator $E=[1]\oplus E_{1}\oplus E_{2}\oplus\cdots$ has diagonal $\{d_{i}\}$ and $\sigma(E)=\{0,1,2\}$. Note that the multiplicity of the eigenvalue $1$ is $1$.

Alternatively, let $\{e_{i}\}_{i=1}^{\infty}$ be an orthonormal basis for a Hilbert space. Set $g=\sum_{i=1}^{\infty}2^{-i/2}e_{i}$, and define the projection $Pf = \langle f,g\rangle g$.
 The operator $P$ has diagonal $\{\frac{1}{2},\frac{1}{4},\frac{1}{8},\ldots\}$ in the basis $\{e_{i}\}$, and thus the operator $F=P\oplus[1]\oplus (2\mathbf{I}-P)$ has diagonal $\{d_{i}\}$. Since $P$ is a rank one projection we see that $\sigma(F) = \{0,1,2\}$, and the multiplicity of the eigenvalue $1$ is $3$. Thus, operators $E$ and $F$ share the same three point spectrum and diagonal, but they are not unitarily equivalent.
\end{ex}

We end the paper by a graphical example demonstrating Theorem \ref{npt}.

\begin{ex} Consider the sequence 
\[
\{d_{i}\}_{i\in\Z}=\bigg\{\ldots,\frac1{16},\frac18,\frac14,\frac12,\frac34,\frac78,\frac{15}{16},\ldots\bigg\}.
\]
By Kadison's Theorem \ref{Kadison} there does not exist a projection with diagonal $\{d_i\}$. However, in \cite{jj} it was shown that the set of possible $3$ point spectra of operators with the diagonal $\{d_i\}$
\[
\big\{A\in(0,1):\exists\,E\ \text{positive with diagonal }\{d_{i}\}_{i\in\N}\text{ and }\sigma(E)=\{0,A,1\}\big\}\]
consists of exactly $7$ points $\{\frac18,\frac16,\frac14,\frac12,\frac34,\frac56,\frac78\}$.
With the help of {\it Mathematica} and the characterization from Theorem \ref{npt} we can find the corresponding set of possible $4$ point spectra of operators. The following figure shows the set 
\[
\big\{(A_{1},A_{2})\in(0,1)^2:\exists\,E\geq 0\text{ with } \sigma(E)=\{0,A_{1},A_{2},1\}\text{ and diagonal } \{d_{i}\}\big\}.
\]
For the study of properties of such sets we refer to \cite{mbjj4}.
\end{ex}
\centerline{\includegraphics[width=2.8in]{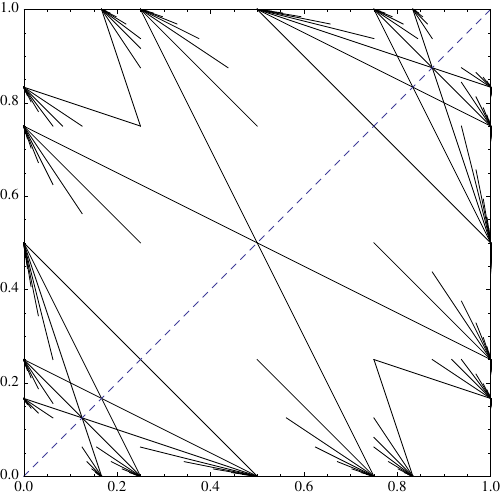}}

\end{document}